\DeclareMathOperator{\U}{U}
\DeclareMathOperator{\LL}{L}
\title[An Ansatz for Hyperk\"{a}hler $8$-Manifolds with Symmetry]{An Ansatz for Hyperk\"{a}hler $8$-Manifolds with two Commuting Rotating Killing Fields}
\author{Joseph Malkoun}
\address{Department of Mathematics and Statiscs\\
Notre Dame University-Louaize\\
Lebanon}
\email{joseph.malkoun@ndu.edu.lb}
\date{\today}
\newtheorem{theorem}{Theorem}[section]
\theoremstyle{remark}
\newtheorem{remark}[theorem]{Remark}
\theoremstyle{definition}
\begin{document}

\maketitle
\begin{abstract}
We consider a hyperk\"{a}hler $8$-manifold admitting either a $\U(1) \times \mathbb{R}$, or a $\U(1) \times \U(1)$ action, where the first factor preserves $g$ and $I$, and acts on $\omega_2+i\omega_3$ by multiplying it by itself, while the 
second factor preserves $g$ and acts triholomorphically. Such data can be reduced to a single function $H$ of two complex variables and two real variables satisfying $6$ equations of Monge-Ampere type, which can be compactly written down using a Poisson bracket. 
\end{abstract}

\section{Introduction}

In recent years, both mathematicians (for example \cite{Haydys-2008}, \cite{Hitchin-2013} and \cite{ACM-2013}) 
and physicists (for example \cite{Butter_Kuzenko-2011}) became interested in hyperk\"{a}hler manifolds with so-called 
``rotating'' Killing fields: these are Killing fields whose flow preserves a complex structure in the $S^2$ of complex structures 
of a hyperk\"{a}hler metric, say $I$, and rotates the other two, $J$ and $K$, in the plane spanned by $J$ and $K$. 
In dimension $4$, those already had been studied by 
authors such as Boyer and Finley in \cite{Boyer_Finley-1982}, and their symmetry reduction leads to the Boyer-Finley-LeBrun equation 
(see \cite{Boyer_Finley-1982} and the later work by LeBrun in \cite{LeBrun-1991}).

Consider flat quaternionic space in real dimension $8$. This can be described as $\mathbb{C}^4$ with coordinates 
$(q^1,q^2,p_1,p_2)$, together with the K\"{a}hler form
\[ \omega_1 = \frac{i}{2}\left(\sum_{j=1}^2 dq^j \wedge d\bar{q}^j + \sum_{j=1}^2 dp_j \wedge d\bar{p}_j \right) \]
as well as the holomorphic symplectic form
\[ \omega_+ = \sum_{j=1}^2 dq^j \wedge dp_j \]
The function
\[ \Omega = \sum_{j=1}^2 |q^j|^2 + \sum_{j=1}^2 |p_j|^2 \]
is a K\"{a}hler potential for $\omega_1$; in other words
\[ \omega_1 = \frac{i}{2}\partial \bar{\partial} \Omega \]
(note the unconventional factor of $1/2$). Let $t$ and $c$ be real variables, so that 
$(e^{it}, c) \in \U(1) \times \mathbb{R}$. The action $\LL$ of $\U(1) \times \mathbb{R}$ on 
$\mathbb{C}^4$ defined by
\[ \LL_{(e^{it},c)}(q^1, q^2, p_1, p_2)^T = (q^1+c, q^2, e^{it} p_1, e^{it} p_2)^T \]
preserves the complex structure of $\mathbb{C}^4$ which we denote by $I$, as well as $\omega_1$, and 
its effect on $\omega_+$ is as follows:
\[ \LL_{(e^{it},c)}^*(\omega_+) = e^{it} \omega_+ \]
We let
\begin{equation} \begin{array}{cl} u &= q^1 + \bar{q}^1 \\
v &= i(\bar{q}^1 - q^1) \\
q &= q^2 \\
\zeta &= p_2/p_1 \\
\rho &= \ln(|p_1|^2) \\
\theta &= i (\ln(\bar{p}_1) - \ln(p_1)) \end{array} \label{new-coordinates}
\end{equation}
We note that, in the new coordinates $(u,v,q,\rho,\theta,\zeta)$,  $\Omega$ is independent of $\theta$. It 
is not independent of $u$ though. But $\Omega$ can be replaced by $\Omega + F + \bar{F}$, where $F$ is a 
holomorphic function on $\mathbb{C}^4$. We replace $\Omega$ by the following
\[ \Omega' = \Omega -\frac{1}{2} (q^1)^2 - \frac{1}{2} (\bar{q}^1)^2 \]
When expressed in the new coordinates, $\Omega'$ becomes independent of both $\theta$ and $u$, and is 
equal to the following function
\[ H(q,\zeta, v, \rho) = \frac{1}{2} v^2 + |q|^2 + e^{\rho}(1+|\zeta|^2) \]
We introduce the following bracket for pairs of functions of $(q,\zeta,v,\rho)$, defined by the following bivector
\begin{align} \{-,-\} = e^{-\rho}(i \partial_v \wedge \partial_{\rho} + i \zeta \partial_{\zeta} \wedge \partial_v + 
  \partial_{\zeta} \wedge \partial_q) \label{Poisson} \end{align}
It is easy to check that the Schouten-Nijenhuis bracket of this bivector with itself vanishes; in other words, 
the bracket is Poisson. Then it is straightforward to check that the following equations hold:
\begin{align}
\{ H_{\rho},iH_v \} &= 1 \label{main-eq-1} \\
\{ H_{\bar{\zeta}}, H_{\bar{q}} \} &= 1 \label{main-eq-2} \\
\{ H_{\rho}, H_{\bar{q}} \} &= \bar{\zeta} \label{main-eq-3} \\
\{ iH_v, H_{\bar{\zeta}} \} &= 0 \label{main-eq-4} \\
\{ H_{\rho}, H_{\bar{\zeta}} \} &= 0 \label{main-eq-5} \\
\{ iH_v, H_{\bar{q}} \} &= 0 \label{main-eq-6}
\end{align}
We claim that this is, in a sense, the general case for such types of actions. More precisely, we have
\begin{theorem} Let $H(q,\zeta, v, \rho)$ be a real-valued function on $\mathbb{C}^2 \times \mathbb{R}^2$ 
which satisfies equations (\ref{main-eq-1})-(\ref{main-eq-6}). Then, on 
$\mathbb{C}^2 \times \mathbb{R}^2 \times \U(1) \times \mathbb{R}$, with coordinates 
$(q, \zeta, v, \rho, \lambda, u)$, we have the natural projection
\[ \pi: \mathbb{C}^2 \times \mathbb{R}^2 \times \U(1) \times \mathbb{R} \to \mathbb{C}^2 \times \mathbb{R}^2 \]
mapping $(q, \zeta, v, \rho, \lambda, u)$ to $(q, \zeta, v, \rho)$. We introduce new coordinates on 
$\mathbb{C}^2 \times \mathbb{R}^2 \times \U(1) \times \mathbb{R} \simeq \mathbb{C}^4$:
\begin{align*} q^1 &= \frac{1}{2}(u+iv) \\
q^2 &= q \\
p_1 &= e^{\rho/2} \lambda \\
p_2 &= e^{\rho/2} \lambda \zeta
\end{align*}
Then, if we let $\Omega(q^1,q^2,p_1,p_2)$ be $\pi^*(H)$ expressed in the new coordinates $(q^i,p_i)$, and if 
we let
\begin{align*} \omega_1 &= \frac{i}{2} \partial \bar{\partial} \Omega \\
\omega_+ &= \sum_{j=1}^2 dq^j \wedge dp_j
\end{align*}
then $\mathbb{C}^4$ with its complex structure $I$, together with $\omega_1$ and $\omega_+$ is hyperk\"{a}hler 
having a $\U(1) \times \mathbb{R}$ action $\LL$
\[ \LL_{(e^{it},c)}(q^1, q^2, p_1, p_2)^T = (q^1+c, q^2, e^{it} p_1, e^{it} p_2)^T \]
which preserves $I$ and $\omega_1$, and such that
\begin{equation} \LL_{(e^{it},c)}^*(\omega_+) = e^{it} \omega_+ \label{action} \end{equation}
Conversely, any hyperk\"{a}hler $8$-dimensional manifold having a free $\U(1)\times \mathbb{R}$ action preserving $I$ and 
$\omega_1$ and acting on $\omega_+$ as in (\ref{action}) can be locally described by a K\"{a}hler potential function 
with respect to $I$ which is a real-valued function of (an open subset of) $\mathbb{C}^2 \times \mathbb{R}^2$ satisfying equations 
(\ref{main-eq-1})-(\ref{main-eq-6}).
\end{theorem}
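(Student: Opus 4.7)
The plan is to handle the two directions of the theorem separately. The forward direction (given $H$, construct the hyperk\"ahler structure) proceeds by direct computation in the $(q^i, p_i)$ coordinates, using the chain rule to express $\Omega$ via $H$ and matching the Poisson-bracket equations on $H$ with the hyperk\"ahler compatibility. The converse proceeds by using the symmetry to normalize coordinates, choosing a K\"ahler potential for $\omega_1$ in invariant form, and identifying the resulting function on the quotient with an $H$ satisfying the six equations.

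For the forward direction, the action in the $(q^i, p_i)$ coordinates is visibly holomorphic and satisfies $\LL_{(e^{it},c)}^* \omega_+ = e^{it}\omega_+$ by direct substitution. In the coordinates $(q, \zeta, v, \rho, \lambda, u)$ it becomes $(\lambda, u) \mapsto (e^{it}\lambda, u+c)$ while fixing $(q, \zeta, v, \rho)$, so $\Omega = \pi^* H$ is automatically invariant, $I$ is preserved, and $\omega_1 = \frac{i}{2}\partial\bar\partial \Omega$ is therefore preserved as well. The substantive content is the hyperk\"ahler compatibility: a closed Hermitian $(1,1)$-form $\omega_1$ together with a closed nondegenerate $(2,0)$-form $\omega_+$ define a hyperk\"ahler metric exactly when the endomorphism $J$ obtained by raising indices of $\mathrm{Re}(\omega_+)$ with the metric $g_1$ squares to $-\mathrm{Id}$ and anticommutes with $I$. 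I would compute the matrix of $\omega_1$ explicitly by applying the chain rule to $\Omega(q^1, q^2, p_1, p_2) = H(q^2,\, p_2/p_1,\, -i(q^1 - \bar q^1),\, \ln|p_1|^2)$, invert it on the horizontal distribution complementary to the $\U(1) \times \mathbb{R}$-orbits, and contract with the constant-coefficient $\omega_+$. The Poisson bivector (\ref{Poisson}) has been engineered so that the components of this contraction, re-expressed in the coframe $\{dq, d\zeta, dv, d\rho\}$ on the quotient, coincide with the left-hand sides of equations (\ref{main-eq-1})-(\ref{main-eq-6}); the six right-hand sides $(1, 1, \bar\zeta, 0, 0, 0)$ are then precisely the scalar conditions that $J$ complete $I$ to a quaternionic algebra compatible with $g_1$.

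For the converse, begin with a hyperk\"ahler $8$-manifold $M$ carrying the specified free $\U(1) \times \mathbb{R}$-action. Since $e^{it}|_{t=0} = 1$, the $\mathbb{R}$-factor preserves all three symplectic forms and hence is triholomorphic; its K\"ahler moment map with respect to $\omega_1$ provides the coordinate $v$, while the flow parameter provides $u$ (so $q^1 = \frac{1}{2}(u + iv)$ matches the flat model). The $\U(1)$-factor preserves $g$ and $I$, so it too is $\omega_1$-Hamiltonian; normalizing its moment map logarithmically yields $\rho$, and the conjugate phase along the $\U(1)$-flow furnishes $\lambda$. The two remaining complex coordinates $q$ and $\zeta$ come from a local holomorphic slice transverse to the action, with $\zeta$ parametrizing the projectivized action of $\U(1)$ on $\omega_+$. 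The local K\"ahler potential for $\omega_1$ can be averaged over $\U(1) \times \mathbb{R}$ (modulo addition of $F + \bar F$ with $F$ holomorphic, which is harmless) to be invariant, and thus descends to a function $H$ on $(q, \zeta, v, \rho)$. Running the forward computation in reverse then shows that the hyperk\"ahler condition for $M$ is equivalent to $H$ satisfying (\ref{main-eq-1})-(\ref{main-eq-6}).

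The principal obstacle is the bookkeeping in the forward direction: expanding $\partial \bar\partial \Omega$ through the non-holomorphic change of variables produces a substantial matrix, and one must organize the calculation so that the hyperk\"ahler compatibility collapses to exactly the six bracket identities stated, no more and no fewer, with the Poisson structure (\ref{Poisson}) serving as the natural combinatorial packaging of the result. Once this identification is in place, the converse reduces essentially to a gauge-fixing argument, and nondegeneracy or positivity of $g$ follow by open deformation from the flat model $H = \tfrac{1}{2}v^2 + |q|^2 + e^\rho(1+|\zeta|^2)$.
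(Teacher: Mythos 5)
Your overall architecture (two directions, normalize coordinates by the symmetry, reduce the hyperk\"ahler condition to the six bracket identities) matches the paper's, but there is a genuine gap at the crux: you never actually derive the equivalence between the hyperk\"ahler compatibility and equations (\ref{main-eq-1})--(\ref{main-eq-6}); you assert that ``the Poisson bivector has been engineered so that'' the components of your contraction coincide with the left-hand sides. That sentence is the theorem, not a proof of it. Moreover, the route you choose makes this step genuinely hard to carry out: forming $J$ by raising indices of $\mathrm{Re}(\omega_+)$ with $g_1$ and demanding $J^2=-\mathrm{Id}$ requires inverting the full $4\times 4$ complex Hessian $(\Omega_{a\bar b})$, and restricting the inversion to ``the horizontal distribution complementary to the orbits'' is not legitimate, since $J$ must be defined on the whole tangent space and does not preserve the orbit directions. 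The missing idea is Calabi's reformulation of the hyperk\"ahler condition as the symplectic Monge--Amp\`ere system
\begin{align*}
\{\Omega_{\bar{p}_i}, \Omega_{\bar{q}^j}\}_{\text{original}} = \delta^i_j, \qquad
\{\Omega_{\bar{q}^i}, \Omega_{\bar{q}^j}\}_{\text{original}} = 0, \qquad
\{\Omega_{\bar{p}_i}, \Omega_{\bar{p}_j}\}_{\text{original}} = 0,
\end{align*}
with $\{-,-\}_{\text{original}}=\sum_j \partial_{p_j}\wedge\partial_{q^j}$. These are already Poisson-bracket identities in first derivatives of $\Omega$, involve no matrix inversion, and reduce directly to the six equations because on $(u,\theta)$-invariant functions the reduced bracket (\ref{Poisson}) is $\bar{p}_1^{-1}$ times the original one. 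Without this (or an equivalent determinant-free formulation), your plan stalls exactly where the content lies.

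Two smaller points. First, the paper carries out the computation in the converse direction (reducing Calabi's equations under the symmetry) and dismisses the forward direction as ``going backwards''; you invert this emphasis, which is fine in principle but means your detailed direction is the one whose computation you have not set up tractably. Second, your closing claim that nondegeneracy of $g$ ``follows by open deformation from the flat model'' is not an argument for an arbitrary solution $H$ of the six equations; the statement is local and the paper does not address positivity either, but you should not present this as established.
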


\begin{proof}
We start by proving the converse. It can be shown that there exist holomorphic coordinates 
$(q^1,q^2,p_1,p_2)$ with respect to $I$ such that
\[ \omega_+ = \sum_{j=1}^2 dq^j \wedge dp_j \]
and the action $L$ takes the required form
\[ \LL_{(e^{it},c)}(q^1, q^2, p_1, p_2)^T = (q^1+c, q^2, e^{it} p_1, e^{it} p_2)^T \]
This is in a way an equivariant version of the celebrated Darboux theorem in symplectic geometry. We introduce the coordinates
\begin{align*}
u &= q^1 + \bar{q}^1 \\
v &= i(\bar{q}^1 - q^1) \\
q &= q^2 \\
r &= |p_1| \\
\theta &= \frac{i}{2} \ln(\bar{p}_1) - \frac{i}{2} \ln(p_1) \\
\zeta &= \frac{p_2}{p_1}
\end{align*}
The inverse coordinate transformations are given by
\begin{align*}
q^1 &= \frac{1}{2}(u+iv) \\
q^2 &= q \\
p_1 &= r e^{i\theta} \\
p_2 &= r e^{i\theta} \zeta
\end{align*}
The coordinate vector fields in the two coordinate systems are related by
\begin{align*}
\partial_{q^1} &= \partial_u - i \partial_v \\
\partial_{q^2} &= \partial_q \\
\partial_{p_1} &= e^{-i\theta} \left( -\frac{\zeta}{r} \partial_{\zeta} + \frac{1}{2} \partial_r -\frac{i}{2r} \partial_{\theta} \right) \\
\partial_{p_2} &= \frac{1}{r}e^{-i\theta} \partial_{\zeta}
\end{align*}
We introduce the following Poisson bracket for pairs of functions on $\mathbb{C}^4$:
\begin{align*} &\{-,-\}_{\text{original}} \\ 
&= \sum_{j=1}^2 \partial_{p_j} \wedge \partial_{q^j} \\
&= e^{-i\theta} \left( \left( -\frac{\zeta}{r} \partial_{\zeta} + \frac{1}{2} \partial_r -\frac{i}{2r} \partial_{\theta} \right) 
\wedge (\partial_u -i \partial_v) + \frac{1}{r} \partial_{\zeta} \wedge \partial_q \right)
\end{align*}
If $\Omega$ is a K\"{a}hler potential for $I$, then the equations that $\Omega$ must satisfy for the metric to be hyperk\"{a}hler are 
the following symplectic Monge-Ampere equations (see \cite{Calabi-1979}):
\begin{align}
\{\Omega_{\bar{p}_i}, \Omega_{\bar{q}^j} \}_{\text{original}} &= \delta^i_{j} \label{C} \\
\{\Omega_{\bar{q}^i}, \Omega_{\bar{q}^j} \}_{\text{original}} &= 0 \label{A} \\
\{\Omega_{\bar{p}_i}, \Omega_{\bar{p}_j} \}_{\text{original}} &= 0 \label{B}
\end{align}
The equations (\ref{main-eq-1})-(\ref{main-eq-6}) are the symmetry reductions of these equations. The K\"{a}hler potential 
$\Omega$ can be replaced with
\[ \Omega' = \Omega + F + \bar{F} \] 
where $F$ is a holomorphic function of the coordinates $q^1$, $q^2$, $p_1$ and $p_2$. We claim that there is a holomorphic 
function $F$ such that in the $(u,v,q,r,\theta,\zeta)$ coordinates, $\Omega'$ does not depend on $u$ nor $\theta$. This 
follows from the fact $\partial_u$ and $\partial_{\theta}$ commute, and are each real parts of holomorphic vector fields. Denote 
by $K$ the function of the coordinates $(q,\zeta,v,r) \in \mathbb{C}^2 \times \mathbb{R} \times \mathbb{R}_+$ whose pullback to 
$\mathbb{C}^2 \times \mathbb{R} \times \mathbb{R}_+ \times \mathbb{R} \times (\mathbb{R}/(2 \pi \mathbb{Z}))$ with 
coordinates $(q,\zeta,v,r,u,\theta)$ by the natural projection is equal to $\Omega'$.

We denote by $\{-,-\}$ the Poisson bracket defined by (\ref{Poisson}). Then equation (\ref{A}) yields equation (\ref{main-eq-6}), 
and equation (\ref{B}) yields equation (\ref{main-eq-5}). The first claim is straightforward. Let us prove the second claim. 
Equation (\ref{B}) implies:
\begin{align*}
&\left\{ - \frac{\bar{\zeta}}{r} K_{\bar{\zeta}} + \frac{1}{2} K_r, \frac{1}{r}K_{\bar{\zeta}} \right\} \\
&= \frac{i}{2r^2}(-\frac{\bar{\zeta}}{r}K_{\bar{\zeta}} 
  + \frac{1}{2} K_r)K_{v\bar{\zeta}} - \frac{i}{2r^2}(-\frac{\bar{\zeta}}{r}K_{v\bar{\zeta}}+\frac{1}{2} K_{vr})K_{\bar{\zeta}} \\
&= \frac{i}{4r^2}(K_r K_{v\bar{\zeta}}-K_{\bar{\zeta}}K_{vr})
\end{align*}
But we also have that
\begin{align*}
&\left\{ - \frac{\bar{\zeta}}{r} K_{\bar{\zeta}} + \frac{1}{2} K_r, \frac{1}{r}K_{\bar{\zeta}} \right\} \\
&= \frac{1}{2} \left\{ K_r, \frac{1}{r}K_{\bar{\zeta}} \right\} \\
&= -\frac{i}{4r^2}K_{rv}K_{\bar{\zeta}} + \frac{i}{4r^2} K_r K_{v\bar{\zeta}} + \frac{1}{2r^2} \left\{ rK_r, K_{\bar{\zeta}} \right\}
\end{align*}  
And then equation (\ref{main-eq-5}) follows, using
\begin{equation} \rho = \ln(r^2) \end{equation}
Using similar calculations, one can show that the remaining equations, namely (\ref{main-eq-1})-(\ref{main-eq-4}), 
follow using the system (\ref{C}). We provide a few details. System (\ref{C}) implies
\begin{align*}
\left\{ -\frac{\bar{\zeta}}{r}K_{\bar{\zeta}} + \frac{1}{2} K_r, K_{\bar{q}^j} \right\} &= \delta^1_j + \frac{i}{2r}(-\frac{\bar{\zeta}}{r}K_{\bar{\zeta}} 
    +\frac{1}{2} K_r) K_{v\bar{q}^j} \\
\left\{ \frac{1}{r} K_{\bar{\zeta}}, K_{\bar{q}^j} \right\} &= \delta^2_j + \frac{i}{2r^2} K_{\bar{\zeta}}K_{v\bar{q}^j}
\end{align*}
Equivalently, one can use the second equation to replace the first equation by a simpler one. One then gets the following simpler system:
\begin{align*}
\frac{1}{2} \left\{ K_r, K_{\bar{q}^j} \right\} &= \delta^1_j + \bar{\zeta} \delta^2_j +\frac{i}{4r} K_r K_{v\bar{q}^j} \\
\left\{ \frac{1}{r} K_{\bar{\zeta}}, K_{\bar{q}^j} \right\} &= \delta^2_j + \frac{i}{2r^2} K_{\bar{\zeta}}K_{v\bar{q}^j}
\end{align*}
The first equation above, with $j=1$, yields equation (\ref{main-eq-1}), and with $j=2$, yields equation (\ref{main-eq-3}). 
The second equation above yields equations (\ref{main-eq-4}) ($j=1$) and (\ref{main-eq-2}) ($j=2$).

The other direction of the proof is easier, and can be obtained by going ``backwards'' in the argument above.
\end{proof}

\begin{remark} Since the proof above depends only on the generating vector fields of the $\U(1) \times \mathbb{R}$ action, one 
may just as well consider a $\U(1) \times \U(1)$ action instead, where the first $\U(1)$ factor preserves $g$ and $I$ and 
rotates $\omega_2$ and $\omega_3$, and the second factor preserves $g$ and is triholomorphic. A similar 
result holds in this case. We present a non-trivial example of the latter type in the following section. \end{remark}

\begin{remark} We remark that, on $\mathbb{C}^2 \times \mathbb{C}^2$, for functions that are pullbacks of functions 
on $\mathbb{C}^2 \times \mathbb{R}^2$, or in other words, for functions that are independent of $\theta$ and $u$, the 
two Poisson brackets are related by
\[ \{-,-\} = \frac{1}{\bar{p}_1} \{-,-\}_{\text{original}} \]
(mod $\partial_{\theta}$ and $\partial_u$). This can be used to provide another quick proof for why $\{-,-\}$ is Poisson, 
using the fact that $\{-,-\}_{\text{original}}$ is a holomorphic Poisson bracket, and that $\bar{p}_1$ is antiholomorphic. \end{remark}

\section{Example: the Calabi metric on $T^*(\mathbb{C}P^2)$}

Using a local affine chart $(z^1,z^2)$ on an affine subset of $\mathbb{C}P^2$, and corresponding fibre coordinates $(w_1,w_2)$ on the 
cotangent bundle restricted to that affine subset, then the coordinates $(z^1,z^2,w_1,w_2)$ are local holomorphic Darboux coordinates 
for the natural holomorphic symplectic structure on the cotangent bundle $T^*(\mathbb{C}P^2)$. The Calabi hyperk\"{a}hler structure (see \cite{Calabi-1979}) has as 
$\omega_2+i\omega_3$ the natural holomorphic symplectic structure on $T^*(\mathbb{C}P^2)$, and as K\"{a}hler potential for $I$ (the natural 
complex structure of $T^*(\mathbb{C}P^2)$ the following function:
\begin{equation}
\Omega = \log(1+|\mathbf{z}|^2) + \sqrt{1+4t} - \log(1+\sqrt{1+4t}),
\end{equation}
with
\begin{equation}
t = (1+|\mathbf{z}|^2)(|\mathbf{w}|^2 + |\sum_{j=1}^2 z^j w_j|^2)
\end{equation}
where $\mathbf{z} = (z^1,z^2)^T$ and $|\mathbf{z}|^2 = |z^1|^2 + |z^2|^2$, and similarly for $\mathbf{w}$ 
and $|\mathbf{w}|^2$. 

Consider the (local) action of $\U(1) \times \U(1)$ given by
\begin{equation}
(e^{it},e^{i\theta}).(\mathbf{z},\mathbf{w}) = (e^{i\theta} \mathbf{z}, e^{i(t-\theta)} \mathbf{w})
\end{equation}
This action preserves $g$ and $I$, and its action on $\omega_2+i\omega_3$ simply multiplies it 
by $e^{it}$.

Restricting further to the open subset given by $z^1 \neq 0$ and $z^2 \neq 0$, we make use of the 
following coordinate substitutions
\begin{align*}
z^1 &= e^{i(q^1-q^2)} \\
z^2 &= e^{i(q^1+q^2)} \\
w_1 &= \frac{i}{2}(p_2-p_1)e^{i(q^2-q^1)} \\
w_2 &= -\frac{i}{2}(p_1+p_2)e^{-i(q^1+q^2)}
\end{align*}
One can check that in the new coordinates $(q^1,q^2,p_1,p_2)$, which are also Darboux for 
$\omega_2 + i \omega_3$, the action is of the ``standard'' form
\[ \LL_{(e^{it},e^{i\theta})}(q^1, q^2, p_1, p_2)^T = (q^1+\theta, q^2, e^{it} p_1, e^{it} p_2)^T \]
Using then the coordinates $(u,v,q,r,\theta,\zeta)$ given by (\ref{new-coordinates}), we get that 
\begin{align*}
&|\mathbf{z}|^2 = e^{-v}(e^{i(\bar{q}-q)} + e^{-i(\bar{q}-q)}) \\
&|\mathbf{w}|^2 = \frac{1}{4} e^{\rho+v}(|1-\zeta|^2 e^{-i(\bar{q}-q)}+|1+\zeta|^2 e^{i(\bar{q}-q)}) \\
&|\sum_{j=1}^2 z^j w_j |^2 = e^{\rho}
\end{align*}
We note that, in the new coordinates $(u,v,q,r,\theta,\zeta)$, $\Omega$ is independent of $u$ and $\theta$, and the action 
is in standard form, so our theorem applies, and we have that, as a function of $(v,\rho,q,\zeta)$, with
\[ \rho = \log(r^2) \]
the function $\Omega$ becomes a function $H$ which satisfies the $6$ equations (\ref{main-eq-1})-(\ref{main-eq-6}).

\section{Conclusion}
While the equations we get (equations (\ref{main-eq-1})-(\ref{main-eq-6})) are 
difficult to solve, the fact that they can be neatly written down using the 
Poisson bracket (\ref{Poisson}) is quite interesting, in the author's opinion. 

\def\cprime{$'$}
\providecommand{\bysame}{\leavevmode\hbox to3em{\hrulefill}\thinspace}
\providecommand{\MR}{\relax\ifhmode\unskip\space\fi MR }
\providecommand{\MRhref}[2]{%
  \href{http://www.ams.org/mathscinet-getitem?mr=#1}{#2}
}
\providecommand{\href}[2]{#2}

\end{document}